\newtheorem{thm}{Theorem}[section]
\newtheorem{cor}[thm]{Corollary}
\newtheorem{prop}[thm]{Proposition}
\theoremstyle{definition}
\def\BZ{{\mathbf{Z}}}
\def\Hom{\operatorname{Hom}\nolimits}
\def\mMod{\operatorname{\!-mod}\nolimits}
\def\MMod{\operatorname{\!-Mod}\nolimits}
\author{Salah Al-Nofayee and Jeremy Rickard} \date\today
\title{Rigidity of tilting complexes and derived equivalence for
  self-injective algebras}
\begin{document}
\maketitle

For any ring $\Lambda$, we shall denote by $\Lambda\MMod$ the category
of all (left) $\Lambda$-modules, and by $\Lambda\mMod$ the full
subcategory of finitely presented modules.

Let $\Lambda$ and $\Gamma$ be derived equivalent rings.  That is, the
unbounded derived categories $D(\Lambda\MMod)$ and $D(\Gamma\MMod)$
are equivalent as triangulated categories.  Then many properties of
$\Lambda$ and $\Gamma$ are shared. One example of this, first proved
in \cite[Corollary 5.3]{Ri2}, is that if $\Lambda$ is a symmetric
finite-dimensional algebra over a field $k$, then so is $\Gamma$.  In
fact, the property of being symmetric can be easily characterized in
terms of the derived category. Recall that an object of the derived
category $D(\Lambda\MMod)$ is called perfect if it is isomorphic to a
bounded complex of finitely-generated projective $\Lambda$-modules,
and that the perfect objects $P$ can be characterized as those that
are compact, meaning that the functor $\Hom_{D(\Lambda\MMod)}(P,-)$
preserves arbitrary direct sums, and that if $\Lambda$ is a
finite-dimensional $k$-algebra, then the objects $X$ of
$D(\Lambda\MMod)$ isomorphic to objects of $D(\Lambda\mMod)$ can be
characterized as those objects such that, for every perfect object
$P$, $\Hom_{D(\Lambda\MMod)}(P,X)$ is finite-dimensional. Thus such
objects $P$ and $X$ are both intrinsically characterized in terms of
the triangulated category $D(\Lambda\MMod)$.

\begin{prop}\label{symm}
  A finite-dimensional $k$-algebra $\Lambda$ is symmetric if and only
  if, for every object $X$ of $D(\Lambda\mMod)$ and every perfect
  object $P$ of $D(\Lambda\MMod)$, $\Hom_{D(\Lambda\MMod)}(P,M)$ and
  $\Hom_{D(\Lambda\MMod)}(M,P)$ are naturally dual as $k$-vector
  spaces.
\end{prop}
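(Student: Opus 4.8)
The plan is to reduce both implications to the Nakayama duality formula
\[
  D\Hom_{D(\Lambda\MMod)}(P,M)\;\cong\;\Hom_{D(\Lambda\MMod)}(M,\nu P),
\]
natural in the perfect object $P$ and in $M\in D(\Lambda\mMod)$ (I write $M$ for the object the statement calls $X$), where $D=\Hom_k(-,k)$ and $\nu=D\Lambda\otimes_\Lambda-$ is the Nakayama functor. So the first step is to establish this formula for an arbitrary finite-dimensional $\Lambda$. Representing $P$ by a bounded complex of finitely generated projectives, $\mathrm{R}\Hom_\Lambda(P,-)$ is computed by the naive Hom-complex, and the evaluation map gives a natural isomorphism of complexes $\mathrm{R}\Hom_\Lambda(P,M)\cong\Hom_\Lambda(P,\Lambda)\otimes_\Lambda M$. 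Applying $D$ and using the adjunction $\Hom_k(A\otimes_\Lambda M,k)\cong\Hom_\Lambda(M,\Hom_k(A,k))$ for a right module $A$, together with the identification $\Hom_k(\Hom_\Lambda(P,\Lambda),k)\cong D\Lambda\otimes_\Lambda P=\nu P$ for perfect $P$, yields the displayed isomorphism after passing to cohomology (exactness of $D$ over a field gives $H^0(DC)\cong D\,H^0(C)$). Since $M$ has finite-dimensional cohomology, every space in sight is finite-dimensional, so no double-dual pathology arises.

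For the forward direction, if $\Lambda$ is symmetric then $D\Lambda\cong\Lambda$ as $\Lambda$-$\Lambda$-bimodules, hence $\nu\cong\id$ as functors on $\Lambda\MMod$, and the formula collapses to a natural isomorphism $D\Hom(P,M)\cong\Hom(M,P)$ — exactly the asserted natural duality. This direction is essentially formal once the formula is in hand.

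For the converse I would assume the natural duality $\Hom(P,M)\cong D\Hom(M,P)$ and compare it with the Nakayama formula. Dualizing the latter and combining gives a natural isomorphism $\Hom_{D(\Lambda\MMod)}(M,\nu P)\cong\Hom_{D(\Lambda\MMod)}(M,P)$ for all $M$ and all perfect $P$. Letting $M$ range over $\Lambda\mMod$ and invoking Yoneda forces an isomorphism $\nu P\cong P$, natural in $P$. Specializing to $P=\Lambda$ gives a left-module isomorphism $\nu\Lambda=D\Lambda\cong\Lambda$; exploiting naturality with respect to the right-multiplication endomorphisms $\rho_\lambda\colon\Lambda\to\Lambda$ (which are left $\Lambda$-module maps) shows this isomorphism also intertwines the right $\Lambda$-actions, upgrading it to a bimodule isomorphism $D\Lambda\cong\Lambda$ — precisely the condition that $\Lambda$ be symmetric.

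The main obstacle is the converse. The delicate points are making the Yoneda passage from the natural isomorphism of Hom-functors to the object-level isomorphism $\nu P\cong P$ fully rigorous in the triangulated setting, and then verifying that naturality in $P$ genuinely promotes $D\Lambda\cong\Lambda$ from a one-sided to a two-sided (bimodule) isomorphism, so that one lands in the symmetric, rather than merely weakly symmetric or self-injective, case. The duality formula and the forward direction, by contrast, are routine.
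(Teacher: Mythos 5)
Your proof is correct, but it takes a genuinely different and more self-contained route than the paper's. The paper disposes of the forward direction by citing \cite[Corollary 3.2]{Ri3}, and of the converse in two lines: setting $P=X=\Lambda$ in the assumed duality gives a $k$-linear isomorphism $\Lambda^{\vee}\cong\Lambda$, and naturality (in particular with respect to the right-multiplication endomorphisms of $\Lambda$, which are morphisms in $D(\Lambda\MMod)$) upgrades this to a bimodule isomorphism, which is precisely the definition of symmetric. You instead prove the general Nakayama duality $D\Hom(P,M)\cong\Hom(M,\nu P)$ for an arbitrary finite-dimensional algebra, deduce the forward direction from $\nu\cong\id$, and for the converse combine the hypothesis with this formula to get $\Hom(M,P)\cong\Hom(M,\nu P)$, apply Yoneda to obtain $\nu P\cong P$ naturally in $P$, and then specialize to $P=\Lambda$ — ending with exactly the same bimodule-upgrading step the paper uses. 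The two points you flag as delicate do go through: the Yoneda step is legitimate because $P$ and $\nu P$ both lie in $D^b(\Lambda\mMod)$, over which $M$ ranges, and the Yoneda correspondence is itself natural in $P$; and $\nu(\rho_\lambda)$ is identified with right multiplication by $\lambda$ on $\Lambda^{\vee}$, so naturality against all $\rho_\lambda$ gives right $\Lambda$-linearity and hence a bimodule isomorphism, ruling out the merely weakly symmetric case. What your approach buys is independence from the citation to \cite{Ri3} together with a duality formula valid for every finite-dimensional algebra; what it costs is length — and in fact your Yoneda detour is dispensable, since once you have the combined isomorphism $\Hom(M,P)\cong\Hom(M,\nu P)$ (or even just the original hypothesis) you could specialize $M=P=\Lambda$ immediately and read off the bimodule isomorphism, which is exactly the paper's shortcut.
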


\begin{proof}
  It was shown in \cite[Corollary 3.2]{Ri3} that if $\Lambda$ is
  symmetric, then this duality holds.  Conversely, if there is such a
  duality, then the case $P=X=\Lambda$ shows that there is an
  isomorphism between $\Lambda$ and its dual, which, by naturality, is
  an isomorphism of bimodules. Thus $\Lambda$ is symmetric.
\end{proof}

For the weaker property of being self-injective, there seems to be no
such simple direct characterization in terms of the derived
category. However, our main theorem in this paper is that, at least
for finite-dimensional algebras over an algebraically closed field,
the property of being self-injective is preserved by derived
equivalence. The proof depends on a theorem of Huisgen-Zimmermann
and Saorin\cite{HS} on rigidity of tilting complexes. In Section 1 we
show how this theorem also follows from the results of \cite{Ri1}.

\section{Rigidity for tilting complexes}

Let $k$ be an algebraically-closed field and let $\Lambda$ be a
finite-dimensional $k$-algebra.  In Corollary 9 of \cite{HS},
Huisgen-Zimmermann and Saor\'in prove a result that easily implies the
following theorem.

\begin{thm}\label{finite}
  Let $\dots,P^{-1},P^0,P^1,\dots$ be a sequence of finitely-generated
  projective $\Lambda$-modules, such that $P^i=0$ for all but finitely
  many $i$. Then up to isomorphism there are only finitely many
  tilting complexes
$$P = \dots\to P^{-1}\to P^0\to P^1\to\dots.$$
\end{thm}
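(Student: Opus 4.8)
The plan is to realize all the complexes with the given underlying graded module as the points of an affine variety on which an algebraic group acts, so that isomorphism classes become orbits, and then to exploit the defining vanishing $\Hom_{D(\Lambda\MMod)}(P,P[1])=0$ of a tilting complex to show that the relevant orbits are open.

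First I would fix the graded module and form the finite-dimensional affine $k$-space $A=\bigoplus_i\Hom_\Lambda(P^i,P^{i+1})$, which is finite-dimensional because only finitely many $P^i$ are nonzero and each $\Hom_\Lambda(P^i,P^{i+1})$ is finite-dimensional. Inside $A$ sits the closed subvariety
$$V=\{\,d=(d^i)\in A : d^{i+1}d^i=0\text{ for all }i\,\}$$
of differentials making $\bigoplus_i P^i$ into a complex. The linear algebraic group $G=\prod_i\Aut_\Lambda(P^i)$ acts on $V$ by $\phi\cdot d=(\phi^{i+1}d^i(\phi^i)^{-1})$, and two points of $V$ lie in the same $G$-orbit exactly when the corresponding complexes are isomorphic as complexes, hence also in $D(\Lambda\MMod)$. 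Thus every isomorphism class of tilting complexes with this underlying graded module is a union of $G$-orbits, and it suffices to show that only finitely many $G$-orbits consist of tilting complexes.

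Next I would compute the tangent spaces at a point $d\in V$ that is a tilting complex, using the endomorphism complex $\mathcal{E}$ with $\mathcal{E}^n=\bigoplus_i\Hom_\Lambda(P^i,P^{i+n})$ and differential $D(\psi)=d\psi-(-1)^n\psi d$; recall $H^n(\mathcal{E})\cong\Hom_{D(\Lambda\MMod)}(P,P[n])$. Differentiating the condition $d^2=0$ shows $T_dV=\ker(D\colon\mathcal{E}^1\to\mathcal{E}^2)$, and differentiating the orbit map $\phi\mapsto\phi\cdot d$ at the identity shows $T_d(G\cdot d)=\im(D\colon\mathcal{E}^0\to\mathcal{E}^1)$. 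Hence the normal space $T_dV/T_d(G\cdot d)$ is precisely $H^1(\mathcal{E})=\Hom_{D(\Lambda\MMod)}(P,P[1])$, which vanishes since $P$ is tilting. Because the orbit $G\cdot d\cong G/\mathrm{Stab}(d)$ is smooth of dimension $\dim T_d(G\cdot d)$, the equality $T_d(G\cdot d)=T_dV$ forces $V$ to be smooth at $d$ with the orbit open in $V$ near $d$, so each tilting orbit is open.

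Finally, the open orbits of tilting complexes are pairwise disjoint open subsets of $V$; since $V$ is a Noetherian topological space it cannot contain infinitely many disjoint nonempty open sets, so there are only finitely many such orbits, and therefore only finitely many isomorphism classes of tilting complexes of the given shape. I expect the main obstacle to be the middle step: identifying $T_dV/T_d(G\cdot d)$ with $\Hom_{D(\Lambda\MMod)}(P,P[1])$ (a Voigt-type lemma for complexes) and deducing openness of the orbit from the vanishing of this normal space, taking care that the orbit map is the smooth quotient $G\to G/\mathrm{Stab}(d)$ so that the dimension count is valid in all characteristics.
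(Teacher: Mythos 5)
Your argument is correct in substance, but it is not the paper's own proof: it is essentially the original geometric argument of Huisgen-Zimmermann and Saor\'in \cite[Corollary 9]{HS}, which the paper cites as the source of the theorem and then deliberately sidesteps in favour of a different method. Both proofs start from the same variety $V\subseteq\prod_i\Hom_{\Lambda}(P^i,P^{i+1})$ of differentials, and both finish with the same observation (a Noetherian space cannot contain infinitely many pairwise disjoint nonempty open sets); the difference is in how one shows that the locus of points giving complexes isomorphic to a fixed tilting complex $P_w$ is open. You do it by introducing the group $G=\prod_i\Aut_{\Lambda}(P^i)$ and proving a Voigt-type lemma: the normal space $T_dV/T_d(G\cdot d)$ embeds in $H^1$ of the endomorphism complex, i.e.\ in $\Hom_{D(\Lambda\MMod)}(P,P[1])$, which vanishes, so tilting orbits are open. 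The paper never mentions a group action at all: it forms the universal complex $\tilde{P}$ over $k[V]$ and the constant complex $\tilde{P}_w$, invokes the lifting theorem \cite[Theorem 3.3]{Ri1} to produce an isomorphism between them after base change to the completion $\widehat{k[V]_w}$, and then uses the Artin Approximation Theorem, as in \cite[Theorem 4.1]{Ri1}, to descend that isomorphism to an \'etale neighbourhood $U$ of $w$, so that the image of $U\to V$ is an open set of points $v$ with $P_v\cong P_w$. Your route is more self-contained and makes the role of the vanishing $\Hom(P,P[1])=0$ completely transparent (both proofs, as it happens, use only this weak consequence of being a tilting complex); the paper's route outsources the hard local analysis to ready-made machinery and thereby avoids the delicate points of the orbit argument entirely.

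Two such points in your sketch deserve attention, though neither is fatal. First, for the reduced variety $V$ you only know $T_dV\subseteq\ker\left(D\colon\mathcal{E}^1\to\mathcal{E}^2\right)$; equality holds for the scheme cut out by the equations $d^{i+1}d^i=0$, which may be non-reduced. This is harmless, because the chain of inclusions $\im\left(D\colon\mathcal{E}^0\to\mathcal{E}^1\right)\subseteq T_d(G\cdot d)\subseteq T_dV\subseteq\ker\left(D\colon\mathcal{E}^1\to\mathcal{E}^2\right)$ collapses to equalities as soon as $H^1(\mathcal{E})=0$. Second, the separability issue you flag at the end resolves cleanly: the stabilizer of $d$ in $G$ is the unit group of the finite-dimensional algebra $Z^0(\mathcal{E})$ of chain endomorphisms of $P$, hence is a smooth group of dimension $\dim_k Z^0(\mathcal{E})$, so $\dim G\cdot d=\dim_k\mathcal{E}^0-\dim_k Z^0(\mathcal{E})=\dim_k\im\left(D\colon\mathcal{E}^0\to\mathcal{E}^1\right)$, which forces the inclusion of $\im D$ in $T_d(G\cdot d)$ to be an equality in every characteristic.
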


Since, up to isomorphism, there is only a countable set of
possibilities for the sequence $\dots,P^{-1},P^0,P^1,\dots$, and since
every algebra derived equivalent to $\Lambda$ is isomorphic to the
endomorphism algebra of a tilting complex, the following corollary
follows.

\begin{cor}
  Let $\Lambda$ be a finite-dimensional algebra over an algebraically
  closed field $k$.  Then up to isomorphism there are only countably
  many tilting complexes for $\Lambda$, and hence only countably many
  algebras derived equivalent to $\Lambda$.
\end{cor}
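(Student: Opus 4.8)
The plan is to deduce the statement directly from Theorem~\ref{finite} by an elementary counting argument; essentially all of the real content lies in that theorem, so the remaining work is just bookkeeping.

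First I would count the possible sequences of projective terms. Since $\Lambda$ is finite-dimensional, it has only finitely many isomorphism classes of simple modules, and hence only finitely many isomorphism classes of indecomposable projectives, say $r$ of them. By the Krull--Schmidt theorem each finitely-generated projective module is determined up to isomorphism by its vector of multiplicities in $\BN^r$, so a sequence $\dots,P^{-1},P^0,P^1,\dots$ with only finitely many nonzero terms is determined up to isomorphism by a finite sequence of elements of $\BN^r$. The collection of all finite sequences drawn from a countable set is itself countable, so there are only countably many possibilities for the underlying sequence of projective terms.

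Next I would invoke Theorem~\ref{finite}: for each such fixed sequence there are, up to isomorphism, only finitely many tilting complexes
$$P = \dots\to P^{-1}\to P^0\to P^1\to\dots.$$
Every tilting complex is by definition a bounded complex of finitely-generated projectives, so it arises for at least one such sequence. The set of isomorphism classes of tilting complexes is therefore contained in a countable union of finite sets, and hence is countable. (A given complex may be accounted for by more than one term-sequence, for instance after adding a contractible summand, but this only inflates the count and so is harmless for an upper bound.)

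Finally, to pass from tilting complexes to algebras I would use the derived Morita theory of \cite{Ri1}: any ring derived equivalent to $\Lambda$ is isomorphic to the endomorphism ring $\End_{D(\Lambda\MMod)}(P)$ of some tilting complex $P$. Since isomorphic tilting complexes have isomorphic endomorphism rings, passing to endomorphism rings sends the countable set of isomorphism classes of tilting complexes onto the set of isomorphism classes of algebras derived equivalent to $\Lambda$, which is therefore also countable. I do not anticipate any genuine obstacle here, as the one ingredient carrying real weight, namely the finiteness in Theorem~\ref{finite}, is already available; the only points requiring a little care are the verification that the number of term-sequences is countable and the observation that overcounting does not affect the conclusion.
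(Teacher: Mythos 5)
Your proof is correct and takes essentially the same route as the paper, which disposes of the corollary in a single sentence: up to isomorphism there are only countably many term-sequences, Theorem~\ref{finite} gives finitely many tilting complexes for each, and every derived equivalent algebra is the endomorphism algebra of a tilting complex. Your version simply spells out the Krull--Schmidt counting and the harmlessness of overcounting, which the paper leaves implicit.
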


In particular, this rules out (at least over an uncountable field) the
possibility of a family of non-isomorphic but derived equivalent
algebras, parametrized by an algebraic curve or variety of higher
dimension.  A number of people have noticed that Theorem~\ref{finite}
also follows using the techniques of \cite{Ri1}, but we do not know of
any reference to this in the literature, so let us sketch an
alternative proof along these lines.  The complexes of the form
$$P = \dots\to P^{-1}\stackrel{d^{-1}}{\to}P^0\stackrel{d^{0}}{\to}P^1\to\dots$$ 
are parametrized by the $k$-rational points of an affine variety $V$:
namely, the closed sub-variety of
$$\prod_{i\in\BZ}\Hom_{\Lambda}\left(P^i,P^{i+1}\right)$$
determined by the equations $d^{i+1}d^i=0$.

Let $k[V]$ be the coordinate ring of this variety. Then there is a
natural complex
$$\tilde{P}=\dots\to P^{-1}\otimes_k k[V ]\to P^0\otimes_k k[V]\to P^1\otimes_k k[V]\to\dots$$ 
of $\Lambda\otimes_k k[V]$-modules such that, if $P_v$ is the complex
corresponding to a $k$-rational point $v\in V$ with associated maximal
ideal $\mathfrak{m}_v$, then
$$P_v\cong\tilde{P}\otimes_{k[V]}(k[V]/\mathfrak{m}_v).$$

For each $k$-rational point $w\in V$, there is also a complex
$$\tilde{P}_w= \tilde{P}\otimes_k k[V]$$
with the same terms as $\tilde{P}$, but with
$$\tilde{P}_w\otimes_{k[V]}(k[V]/\mathfrak{m}_v)\cong P_w$$
for every $k$-rational point $v\in V$.

Now suppose that $w$ is a $k$-rational point for which $P_w$ is a
tilting complex, and let $\widehat{k[V]_w}$ be the completion of
$k[V]$ at the maximal ideal $\mathfrak{m}_w$.  Then by \cite[Theorem
3.3]{Ri1}, the complexes $\tilde{P}\otimes_{k[V]}\widehat{k[V]_w}$ and
$\tilde{P}_w\otimes_{k[V]}\widehat{k[V]_w}$ are isomorphic.  Using the
Artin Approximation Theorem as in the proof of \cite[Theorem
4.1]{Ri1}, it follows that for some \'etale neighbourhood $U$ of $w$,
there is an isomorphism between $\tilde{P}\otimes_{k[V]}k[U]$ and
$\tilde{P}_w\otimes_{k[V]}k[U]$.  In particular, for any $k$-rational
point $v$ in the open set that is the image of $U\to V$,
$$P_v = \tilde{P}\otimes_{k[V]}(k[V]/\mathfrak{m}_v)
\cong\tilde{P}_w\otimes_{k[V]}(k[V]/\mathfrak{m}_v)
\cong P_w.$$

Thus, the complexes parametrized by the $k$-rational points of some open
neighbourhood of $w$ are all isomorphic to $P_w$, and it follows that
there can only be finitely many isomorphism classes of complexes $P_v$
that are tilting complexes.  In fact, as in Huisgen-Zimmermann and
Saor\'in's proof, it is only the property
$$\Hom_{D(\Lambda\MMod)}(P,P[1])=0$$
that is needed, so in fact there are only finitely many isomorphism
classes of complexes $P_v$ satisfying this weaker condition.

\section{The main theorem}
For a finite-dimensional $k$-algebra $\Lambda$ we define the Nakayama functor
$$\nu_{\Lambda}: \Lambda\MMod \to \Lambda\MMod$$
by
$$\nu_{\Lambda}(M) =\Lambda^{\vee}\otimes_{\Lambda}M,$$
where $\Lambda^{\vee}$ denotes the $k$-linear dual of $\Lambda$,
regarded as a $\Lambda$-bimodule. Thus if $\Lambda$ is symmetric then
the Nakayama functor is isomorphic to the identity functor.  In
general, $\nu_{\Lambda}$ induces an equivalence between the categories
of finitely generated projective and injective modules for
$\Lambda$. It is a right exact functor, and has a total left
derived functor 
$${\bf L}\nu_{\Lambda}: D(\Lambda\MMod)\to D(\Lambda\MMod),$$
where ${\bf L}\nu_{\Lambda}(X)$ is constructed in the usual way by applying
the functor $-\otimes_{\Lambda}X$ to a projective resolution of
$\Lambda^{\vee}$, or by applying the functor
$\Lambda^{\vee}\otimes_{\Lambda}-$ to a projective resolution of $X$.
If $\Lambda$ is self-injective, however, $\nu_{\Lambda}$ is exact, and
so $\nu_{\Lambda}= {\bf L}\nu_{\Lambda}$. In this case $\nu_{\Lambda}$ is a
self-equivalence of the module category $\Lambda\MMod$ and therefore
induces a self-equivalence of the derived category $D(\Lambda\MMod)$.
In \cite[Proposition 5.2]{Ri2}, it was shown that, if $\Lambda$ and
$\Gamma$ are finite-dimensional $k$-algebras and
$$F:D^-(\Lambda\MMod)\to D^-(\Gamma\MMod)$$
is an equivalence of triangulated categories, then, at least if we
replace $F$ by another ``standard'' equivalence that agrees with $F$
on each object up to a (not-necessarily natural) isomorphism, the diagram
$$\xymatrix{
D^-(\Lambda\MMod)\ar[r]^{{\bf L}\nu_{\Lambda}}\ar[d]
& D^-(\Lambda\MMod)\ar[d]\\
D^-(\Gamma\MMod)\ar[r]^{{\bf L}\nu_{\Gamma}}
& D^-(\Gamma\MMod)
}$$
commutes up to isomorphism of functors, and so
$$F({\bf L}\nu_{\Lambda}X)\cong {\bf L}\nu_{\Gamma}(FX)$$
for any object $X$ of $D^-(\Lambda\MMod)$.

\begin{thm}\label{main}
  Let $\Lambda$ and $\Gamma$ be derived equivalent finite-dimensional
  algebras over an algebraically closed field $k$, with $\Lambda$
  self-injective. Then $\Gamma$ is self-injective.
\end{thm}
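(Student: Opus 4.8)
The plan is to prove that the injective module $\Gamma^\vee$ is projective, which is equivalent to $\Gamma$ being self-injective (equivalently, to $\nu_\Gamma$ being exact). I would deduce this from two assertions: that $\Gamma^\vee$ has finite projective dimension, and that $\Ext^i_\Gamma(\Gamma^\vee,\Gamma)=0$ for all $i>0$. These suffice because a module $M$ of finite projective dimension $d\geq 1$ always has $\Ext^d_\Gamma(M,\Gamma)\neq 0$ (as one sees from a minimal projective resolution), so the vanishing forces $d=0$. Both assertions will come from transporting the Nakayama functor of $\Lambda$ across the derived equivalence and then invoking rigidity.

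Fix a standard equivalence $F\colon D^-(\Lambda\MMod)\to D^-(\Gamma\MMod)$ and put $T=F^{-1}(\Gamma)$, a tilting complex of finitely generated projective $\Lambda$-modules with endomorphism algebra $\Gamma$. Since $\Lambda$ is self-injective, $\nu_\Lambda={\bf L}\nu_\Lambda$ is an exact self-equivalence of $\Lambda\MMod$, hence a triangle self-equivalence of $D^-(\Lambda\MMod)$; in particular it preserves the perfect (that is, compact) objects, and it carries projectives to injectives, which here coincide with projectives. By the compatibility $F\,{\bf L}\nu_\Lambda\cong{\bf L}\nu_\Gamma\,F$ recalled before the theorem, ${\bf L}\nu_\Gamma$ agrees on objects with the conjugate $F\nu_\Lambda F^{-1}$, so it too is a self-equivalence of $D^-(\Gamma\MMod)$ preserving perfect objects. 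In particular $\Gamma^\vee={\bf L}\nu_\Gamma(\Gamma)$ is perfect, which gives the first assertion.

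For the second assertion I would study the orbit $X_n={\bf L}\nu_\Gamma^{\,n}(\Gamma)$, noting that $F^{-1}(X_n)\cong\nu_\Lambda^{\,n}(T)$. The functor $\nu_\Lambda$ permutes the finitely many indecomposable projective $\Lambda$-modules, so this permutation has finite order; consequently the complexes $\nu_\Lambda^{\,n}(T)$, obtained by applying $\nu_\Lambda$ term by term, involve only finitely many sequences of projective terms as $n$ ranges over $\BZ$. Each $\nu_\Lambda^{\,n}(T)$ is again a tilting complex, so Theorem~\ref{finite} shows that $\{X_n:n\in\BZ\}$ falls into finitely many isomorphism classes. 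As ${\bf L}\nu_\Gamma$ is invertible, $n\mapsto[X_n]$ is then periodic: there is $c\geq 1$ with $X_{n+c}\cong X_n$ for all $n$.

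Finally I would combine this periodicity with the opposite one-sided exactness of ${\bf L}\nu_\Gamma$ and its inverse. Being the left derived functor of the right-exact $\nu_\Gamma$, the functor ${\bf L}\nu_\Gamma$ sends complexes with homology in nonpositive degrees to complexes of the same type, so $X_n$ has homology in nonpositive degrees for every $n\geq 0$. Its inverse is the right adjoint ${\bf R}\nu_\Gamma^{-1}={\bf R}\Hom_\Gamma(\Gamma^\vee,-)$, the right derived functor of a left-exact functor, so $X_{-1}={\bf R}\Hom_\Gamma(\Gamma^\vee,\Gamma)$ has homology in nonnegative degrees. By periodicity $X_{-1}\cong X_{c-1}$, and since $c-1\geq 0$ the latter has homology in nonpositive degrees; hence $X_{-1}$ is concentrated in degree $0$, which says exactly that $\Ext^i_\Gamma(\Gamma^\vee,\Gamma)=0$ for $i>0$. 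Together with the finite projective dimension of $\Gamma^\vee$ this yields projectivity of $\Gamma^\vee$, so $\Gamma$ is self-injective. I expect the main obstacle to be precisely this passage from rigidity to periodicity and its collapse into degree $0$: one must verify that each $\nu_\Lambda^{\,n}(T)$ really is a tilting complex with the stated projective terms so that Theorem~\ref{finite} applies, and one must identify the abstract conjugate $F\nu_\Lambda F^{-1}$ and its inverse with the honest functors ${\bf L}\nu_\Gamma$ and ${\bf R}\nu_\Gamma^{-1}$ — the point at which the replacement of $F$ by a standard equivalence is needed.
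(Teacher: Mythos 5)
Your proof is correct, and it shares the paper's overall skeleton: reduce to showing that $\Gamma^{\vee}=\nu_{\Gamma}(\Gamma)$ is projective, transport the Nakayama functor across a standard equivalence so that ${\bf L}\nu_{\Gamma}\cong F\nu_{\Lambda}F^{-1}$, and apply Theorem~\ref{finite} to the tilting complexes $\nu_{\Lambda}^{\,n}(T)$ (whose terms recur because $\nu_{\Lambda}$ permutes the finitely many indecomposable projectives) to make the ${\bf L}\nu_{\Gamma}$-orbit of $\Gamma$ periodic. Where you genuinely diverge is in how periodicity is collapsed into projectivity. The paper works one indecomposable projective $Q$ at a time: it refines the periodicity so that ${\bf L}\nu_{\Gamma}^{s}$ fixes each indecomposable summand, picks $n<s$ maximal with ${\bf L}\nu_{\Gamma}^{n}(Q)$ not projective, represents it by a minimal (hence indecomposable) bounded complex of projectives, applies $\nu_{\Gamma}$ termwise to obtain a complex of injectives computing ${\bf L}\nu_{\Gamma}^{n+1}(Q)$, and gets a contradiction because the lowest differential of that complex must be a split monomorphism. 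You instead argue by amplitude: ${\bf L}\nu_{\Gamma}$ is right $t$-exact, its quasi-inverse ${\bf R}\Hom_{\Gamma}(\Gamma^{\vee},-)$ is left $t$-exact, so periodicity traps $X_{-1}={\bf R}\Hom_{\Gamma}(\Gamma^{\vee},\Gamma)$ in degree zero, giving $\Ext^{i}_{\Gamma}(\Gamma^{\vee},\Gamma)=0$ for $i>0$; combined with perfectness of $\Gamma^{\vee}$ and the standard fact that a finitely generated module of finite projective dimension $d\geq 1$ has $\Ext^{d}(M,\Gamma)\neq 0$, this forces $\Gamma^{\vee}$ to be projective. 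Your route buys some economy --- you only need periodicity of the orbit of the regular module, not of each indecomposable summand, and you avoid minimal resolutions and the indecomposability bookkeeping --- at the cost of two extra (standard, and correctly flagged by you) inputs: the identification of the quasi-inverse of ${\bf L}\nu_{\Gamma}$ with ${\bf R}\Hom_{\Gamma}(\Gamma^{\vee},-)$, which is legitimate because the paper's functor isomorphism makes ${\bf L}\nu_{\Gamma}$ an equivalence and adjoints of equivalences are quasi-inverses (and is unproblematic on $D^-(\Gamma\MMod)$ once $\Gamma^{\vee}$ is known to be perfect), and the finite-projective-dimension lemma, whose minimal-resolution proof is exactly the kind of splitting argument the paper uses directly inside its induction. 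Both arguments are valid; the paper's is the more self-contained, never invoking negative powers or adjoint functors.
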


\begin{proof}
  Since $\nu_{\Gamma}$ induces an equivalence between the categories
  of finitely generated projective and finitely generated injective
  $\Gamma$-modules, we just need to show that $\nu_{\Gamma}(\Gamma)$
  is projective.  

  Since $\nu_{\Lambda}$ is a self-equivalence of the module category
  $\Lambda\MMod$, some power $\nu_{\Lambda}^r$ of the Nakayama functor
  has the property that $\nu_{\Lambda}^r(P)\cong P$ for every
  projective $\Lambda$-module $P$.

  Let $F:D^-(\Lambda\MMod)\to D^-(\Gamma\MMod)$ be a derived
  equivalence.  Then $F^{-1}(\Gamma)$ is isomorphic to some tilting
  complex $T$ for $\Lambda$, and $\nu_{\Lambda}^r(T)$ is also a
  tilting complex which has components isomorphic to those of
  $T$. Thus, for some multiple $s$ of $r$, $\nu_{\Lambda}^s(T)$ is
  isomorphic to $T$, by Theorem~\ref{finite}. Replacing $s$ by some
  further multiple if necessary, we can assume that
  $\nu_{\Lambda}^s(T_i)$ is isomorphic to $T_i$ for every
  indecomposable direct summand $T_i$ of $T$, since there are only
  finitely many such summands up to isomorphism.

  By the discussion preceding the statement of Theorem~\ref{main}, it
  follows that ${\bf L}\nu_{\Gamma}$ is a self-equivalence of
  $D^-(\Gamma\MMod)$ and that ${\bf L}\nu_{\Gamma}^s(Q)$ is isomorphic
  to $Q$ for every indecomposable projective $\Gamma$-module $Q$. We
  shall prove that ${\bf L}\nu_{\Gamma}^n(Q)$ is isomorphic to a
  projective module concentrated in degree zero for all $0\leq n\leq
  s$.  For if not, then let $n<s$ be maximal such that this is not
  true.  

  Since ${\bf L}\nu_{\Gamma}$ preserves the property of having zero
  homology in positive degrees, ${\bf L}\nu_{\Gamma}^n(Q)$ has zero
  homology in positive degrees. Let
$$\dots\to0\to Q^{-k}\to Q^{-k+1}\to\dots\to Q^0\to 0\to\dots$$
be a minimal projective resolution of ${\bf L}\nu_{\Gamma}^n(Q)$ with
$k>0$ and $Q^{-k}\not\cong0$.  It is an indecomposable bounded complex
of finitely generated projective modules, since ${\bf L}\nu_{\Gamma}$
is a self-equivalence of $D^-(\Gamma\MMod)$ and therefore preserves
the property of being an indecomposable perfect object.

Since $\nu_{\Gamma}$ is an equivalence between the categories of
finitely generated projective and injective $\Gamma$-modules, when we
apply $\nu_{\Gamma}$ we get an indecomposable complex
$$\dots\to 0\to\nu_{\Gamma}Q^{-k}\to\nu_{\Gamma}Q^{-k+1}\to\dots
\to\nu_{\Gamma}Q^0\to0\to\dots$$ of injective $\Gamma$-modules with
$\nu_{\Gamma}Q^{-k}\not\cong0$. But this complex is isomorphic in
$D(\Gamma\MMod)$ to ${\bf L}\nu_{\Gamma}^{n+1}(Q$), which, by the
assumption on $n$, has homology concentrated in degree zero.  Thus
the differential
$$\nu_{\Gamma}Q^{-k}\to\nu_{\Gamma}Q^{-k+1}$$
is injective, and therefore split, since $\nu_{\Gamma}Q^{-k}$ is an
injective module. But this contradicts the indecomposability of the
complex.  By contradiction it follows that $\nu_{\Gamma}Q$ is a
projective module for every indecomposable projective $\Gamma$-module
$Q$, and hence $\Gamma$ is a self-injective algebra.
\end{proof}

It would be interesting to find a simple direct characterization of
self-injective algebras in terms of properties of their derived
categories as we did for symmetric algebras in Proposition~\ref{symm},
but it is not clear that this can be done.


\begin{thebibliography}{[Nee]}
\bibitem[Ri1]{Ri1} J.~Rickard,
        {\em Lifting theorems for tilting complexes},
	J. Alg. {\bf 142} (1991), 383--393. 
\bibitem[Ri2]{Ri2} J.~Rickard,
        {\em Derived equivalences as derived functors}, 
        J. London Math. Soc. (2), {\bf 43} (1991), 37--48.
\bibitem[Ri3]{Ri3} J.~Rickard,
        {\em Equivalences of derived categories for symmetric algebras},
        J. Alg. {\bf 257} (2002), 460--481.
      \bibitem[HS]{HS} B.~Huisgen-Zimmermann and M.~Saorin, {\em
          Geometry of chain complexes and outer automorphisms under
          derived equivalence}, Trans. Amer. Math. Soc. {\bf 353}
        (2001), 4757--4777.
\end{thebibliography}
\end{document}